     \def\section{\@startsection{section}{1}%
     \z@{.7\linespacing\@plus\linespacing}{.5\linespacing}%
     {\bfseries
     \centering
     }}
     \def\@secnumfont{\bfseries}
\newtheorem{theorem}{Theorem}[section]
\newtheorem{lemma}[theorem]{Lemma}
\theoremstyle{definition}
\newtheorem{example}[theorem]{Example}
\theoremstyle{remark}
\numberwithin{equation}{section}
\newcommand{\CCC}{\mathcal{C}}
\newcommand{\EEE}{\mathcal{E}}
\newcommand{\FFF}{\mathcal{F}}
\newcommand{\LLL}{\mathcal{L}}
\newcommand{\df}[1]{\,\mathrm{d}#1}                         
\newcommand{\eps}{\varepsilon}                              
\begin{document}

\setlength{\parindent}{0cm}
\setlength{\parskip}{0.5cm}

\title[An extended Novikov-type criterion]{An
  extended Novikov-type criterion for local martingales with jumps}

\author{Alexander Sokol}

\address{Alexander Sokol: Institute of Mathematics, University of
  Copenhagen, 2100 Copenhagen, Denmark}
\email{alexander@math.ku.dk}
\urladdr{http://www.math.ku.dk/$\sim$alexander}

\subjclass[2000] {Primary 60G44; Secondary 60G40}

\keywords{Martingale, Exponential martingale, Uniform integrability, Novikov}

\begin{abstract}
For local martingales with nonnegative jumps, we prove a sufficient
criterion for the corresponding exponential martingale to be a true
martingale. The criterion is in terms of exponential moments of a
convex combination of the optional and predictable quadratic
variation. The result extends earlier known criteria.
\end{abstract}

\maketitle

\noindent

\section{Introduction}
\label{sec:intro}

In \cite{AAN}, Novikov introduced a sufficient criterion for the
exponential martingale of a continuous local martingale to be a
uniformly integrable martingale. In this paper, we prove a similar
result in the case where the local martingale is not continuous, but
is assumed to have nonnegative jumps. The novelty of our criterion
rests in that our result is stronger than previously known results, in
that it combines optional and predictable components and in that our
proof of the criterion demonstrates a straightforward two-step structure. We begin by fixing our notation and
recalling some results from stochastic analysis.

Assume given a filtered probability space $(\Omega,\FFF,(\FFF_t)_{t\ge0},P)$
satisfying the usual conditions, see \cite{PP} for the definition
of this and other probabilistic concepts. For any local martingale
$M$, we say that $M$ has initial value zero if $M_0=0$. For any local
martingale $M$ with initial value zero, we denote by $[M]$ the
quadratic variation of $M$, that is, the unique increasing adapted
process with initial value zero such that $M^2-[M]$ is a local
martingale.

If $A$ is an adapted increasing process with initial value zero, we
say that $A$ is integrable if $EA_\infty$ is finite, and we say that
$A$ is locally integrable if $A^{T_n}$ is integrable for some
localising sequence $(T_n)$, that is, a sequence of stopping times
increasing to infinity. If $A$ is an adapted process with initial
value zero and paths of finite variation, we say that $A$ is locally
integrable if the variation process is locally integrable. Whenever
$A$ is adapted, has initial value zero, is of finite variation and
is locally integrable, there exists a predictable process $\Pi^*_pA$
with those same properties such that $A-\Pi^*_pA$ is a local
martingale, see Definition VI.21.3 of \cite{RW2}. We refer to
$\Pi^*_pA$ as the dual predictable projection of $A$, or simply as the
compensator of $A$.

If $M$ is locally square integrable, it holds that $[M]$ is locally
integrable, and we denote by $\langle M\rangle$ the compensator of
$[M]$. We refer to $\langle M\rangle$ as the predictable quadratic
variation of $M$. It then holds that $M^2-\langle M\rangle$ is a local martingale.

For any local martingale with initial value zero,
there exists by Theorem 7.25 of \cite{HWY} a unique decomposition
$M=M^c+M^d$, where $M^c$ is a continuous local martingale and $M^d$ is
a purely discontinuous local martingale, both with initial value
zero. Here, we say that a local martingale with initial value zero is
purely discontinuous if it has zero quadratic covariation with
any continuous local martingale with initial value zero. We refer to
$M^c$ as the continuous martingale part of $M$, and refer to $M^d$ as
the purely discontinuous martingale part of $M$.

With $M$ a local martingale with
initial value zero and $\Delta M\ge0$, the exponential martingale of $M$, also known as the
Dol{\'e}ans-Dade exponential of $M$, is given by
\begin{align}
  \EEE(M)_t &= \exp\left(M_t-\frac{1}{2}[M^c]_t\right)\prod_{0<s\le t}(1+\Delta M_s)\exp(-\Delta M_s).
\end{align}
The process $\EEE(M)$ is the unique c\`{a}dl\`{a}g
solution in $Z$ to the stochastic differential equation $Z_t = 1 +
\int_0^t Z_{s-}\df{M}_s$, see Theorem II.37 of \cite{PP}. By Theorem 9.2 of
\cite{HWY}, $\EEE(M)$ is always a local martingale with initial
value one. We are interested in sufficient criteria to ensure that
$\EEE(M)$ is a uniformly integrable martingale. This is a classical
question in probability theory, with applications for example in
finance, stochatic differential equations and statistical inference for continuously observed stochastic
processes, see for example \cite{PS}, \cite{TB}, \cite{KS2}, \cite{KA}
or\cite{YK}. For the case when $M$ is continuous,
sufficient criteria ensuring that $\EEE(M)$ is a uniformly integrable
martingale have been obtained in \cite{AAN}, \cite{CS}, \cite{NK2},
\cite{KS3} and \cite{MU}. For the case when $M$ has jumps, see \cite{LM}, \cite{ISS}, \cite{TO}, \cite{JAY} and \cite{KS}.

We now explain the particular result to be obtained in this paper. In
\cite{AAN}, the following result was obtained: If $M$ is a continuous
local martingale with initial value zero and $\exp(\frac{1}{2}[M]_\infty)$ is integrable, then
$\EEE(M)$ is a uniformly integrable martingale. This criterion is
known as Novikov's criterion.  In \cite{NK}, it was shown that for a continuous local martingale
$M$ with initial value zero, the condition
\begin{align}
  \label{eq:KrylovNovikov}
  \liminf_{\eps\to0}\eps\log E\exp\left((1-\eps)\frac{1}{2} [M]_\infty)\right)<\infty
\end{align}
suffices to ensure that $\EEE(M)$ is a uniformly integrable
martingale. This is an extension of the result in \cite{AAN}. And
in \cite{AS}, optimal constants
$\alpha(a)$ and $\beta(a)$ for $a>-1$ were identified such that when $\Delta
M1_{(\Delta M\neq0)}\ge a$, integrability of
$\exp(\alpha(a)[M]_\infty)$ and $\exp(\beta(a)[M]_\infty)$ suffices to
ensure that $\EEE(M)$ is a uniformly integrable martingale, and it was
noted that for the case $a=0$, $\alpha(a)=\beta(a)=\frac{1}{2}$. Thus,
the case where $\Delta M\ge0$ presents a higher level of regularity
than the general case. In this note, we prove that when $\Delta
M\ge0$, the condition
\begin{align}
  \label{eq:FullExtendedNovikov}
  \liminf_{\eps\to0}\eps\log E\exp\left((1-\eps)\frac{1}{2} (\alpha
    [M]_\infty+(1-\alpha)\langle M\rangle_\infty)\right)<\infty
\end{align}
suffices to ensure that $\EEE(M)$ is a uniformly integrable
martingale, thus extending the results of \cite{AAN} and
\cite{NK}. Note that while sufficiency of simple Novikov-type criteria such as
those given in \cite{AS} follow from the results of \cite{LM}, the condition
(\ref{eq:FullExtendedNovikov}) does not. Also, to the best of the
knowledge of the author, the condition (\ref{eq:FullExtendedNovikov})
is the first one obtained applying both the quadratic variation and the predictable quadratic
variation at the same time.

\section{Main results and proofs}
\label{sec:Main}

In this section, we will prove the following theorem.

\begin{theorem}
\label{theorem:main}
Let $M$ be a locally square integrable local martingale with initial value zero and $\Delta
M\ge0$. Fix $0\le \alpha\le1$ and assume that
\begin{align}
  \label{eq:mainSuffCrit}
  \liminf_{\eps\to0}\eps\log E\exp\left((1-\eps)\frac{1}{2}(\alpha [M]_\infty+(1-\alpha)\langle M\rangle_\infty)\right)<\infty.
\end{align}
Then $\EEE(M)$ is a uniformly integrable martingale. If $\alpha=1$,
it is not necessary that $M$ be locally square
integrable. Furthermore, for all $0\le \alpha\le1$, the constant
$1/2$ in (\ref{eq:mainSuffCrit}) is optimal.
\end{theorem}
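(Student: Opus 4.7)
The plan is to prove the theorem via a two-step strategy matching the author's announced structure.

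\emph{Step 1 (static criterion).} I would first establish that if the bare moment
\begin{equation*}
E\exp\Bigl(\tfrac{1}{2}(\alpha[M]_\infty + (1-\alpha)\langle M\rangle_\infty)\Bigr)
\end{equation*}
is finite (no $\liminf$), then $\EEE(M)$ is a uniformly integrable martingale. Standard localization reduces to the case of bounded $[M]$ and $\langle M\rangle$, so that it suffices to show $E\EEE(M)_\infty = 1$. The $\Delta M \ge 0$ hypothesis is crucial here: it gives $\log(1+x) - x \ge -x^2/2$ for $x \ge 0$, hence the pointwise bound $\EEE(M) \ge \exp(M - \tfrac{1}{2}[M])$; and the Bernoulli inequality $(1+x)^\lambda \le 1 + \lambda x$ for $\lambda \in [0,1]$, $x \ge 0$, hence the comparison $\EEE(\lambda M) \ge \EEE(M)^\lambda \exp(\tfrac{\lambda(1-\lambda)}{2}[M^c])$ for any $\lambda \in (0,1)$. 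To handle the interpolation, rewrite $\alpha[M] + (1-\alpha)\langle M\rangle = \langle M\rangle + \alpha([M] - \langle M\rangle)$ and exploit that $[M] - \langle M\rangle$ is a finite-variation local martingale (the compensator difference), combining the moment hypothesis with a suitable H\"older splitting.

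\emph{Step 2 ($\liminf$ extension).} Given Step 1, I would deduce the full theorem by rescaling. By hypothesis there exist $\eps_n \downarrow 0$ and $C$ with $E\exp((1-\eps_n)\tfrac{1}{2}(\alpha[M]_\infty + (1-\alpha)\langle M\rangle_\infty)) \le e^{C/\eps_n}$. Setting $\lambda_n = \sqrt{1-\eps_n}$, we have $[\lambda_n M] = (1-\eps_n)[M]$ and $\langle \lambda_n M\rangle = (1-\eps_n)\langle M\rangle$, so that the static criterion applied to $\lambda_n M$ gives that each $\EEE(\lambda_n M)$ is a uniformly integrable martingale. A H\"older inequality applied to the Bernoulli comparison from Step 1, together with the $e^{C/\eps_n}$ bound on the exponential moments (raised to the small power $(1-\lambda_n)/\lambda_n = O(\eps_n)$), then transfers this conclusion to $\EEE(M)$ as $n \to \infty$.

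The main obstacle is Step 1, specifically the interpolation for $\alpha \in (0,1)$. The extreme cases $\alpha = 1$ (pure $[M]$) and $\alpha = 0$ (pure $\langle M\rangle$) correspond to jump-adapted versions of Novikov's and L\'epingle--M\'emin's criteria and should be accessible from the literature; the genuinely new ingredient is combining an optional process with its predictable compensator inside the exponential moment. The author's emphasis on a "straightforward two-step" structure suggests that this interpolation is handled via a single clean device, perhaps an auxiliary exponential supermartingale of the form $\EEE(M)\exp(-\tfrac{1}{2}(\alpha[M] + (1-\alpha)\langle M\rangle))$ whose integrability directly encodes the convex combination. Step 2, by contrast, follows a standard Novikov--Krylov rescaling pattern whose adaptation to jumps is enabled precisely by the Bernoulli inequality available under $\Delta M \ge 0$.
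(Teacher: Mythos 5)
Your high-level skeleton (a strong criterion proved by H{\"o}lder's inequality, then a rescaling $M\mapsto\lambda M$ to reach the $\liminf$ condition) does match the paper's announced plan, but as written the proposal has several genuine gaps. First, your ``static criterion'' with the constant exactly $1/2$ cannot be obtained by ``standard localization'': stopping $M$ at $T_n$ only shows that each $\EEE(M^{T_n})$ is a uniformly integrable martingale, which is automatic (every $\EEE(M)$ here is already a nonnegative local martingale), and gives no control on $E\EEE(M)_\infty$ for the unstopped process --- that passage to the limit is the entire content of the theorem. The paper's first step accordingly proves the result only under the strictly stronger hypothesis that $\exp((1+\eps)\tfrac12(\alpha[M]_\infty+(1-\alpha)\langle M\rangle_\infty))$ is integrable, because the H{\"o}lder exponents yield the constant $y(y-1)/(2(r-1))$ whose infimum $1/2$ is not attained; the exact-$1/2$ and $\liminf$ cases are recovered only in a second pass. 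Second, your key comparison points the wrong way. The Bernoulli inequality gives $\EEE(\lambda M)\ge\EEE(M)^\lambda\exp(\tfrac{\lambda(1-\lambda)}{2}[M^c])$, whence $1=E\EEE(\lambda M)_\infty\ge E\EEE(M)_\infty^\lambda$, which you already know from the supermartingale property. To conclude $E\EEE(M)_\infty\ge1$ you need the reverse bound $\EEE(\lambda M)\le\EEE(M)^\lambda\exp(\tfrac{\lambda(1-\lambda)}{2}[M])$, which rests on the non-elementary inequality $0\le\log\tfrac{1+\lambda x}{(1+x)^\lambda}\le\tfrac{\lambda(1-\lambda)}{2}x^2$ (Lemma \ref{lemma:QVProofIneq}); the factor $\lambda(1-\lambda)$ rather than $1-\lambda$ is precisely what makes the subsequent Jensen/H{\"o}lder step and the limit $\lambda\to1$ work under the $\liminf$ hypothesis.

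Third, the heart of the theorem --- the interpolation for $0<\alpha<1$ --- is left open in your proposal, and your guessed device (integrability of $\EEE(M)\exp(-\tfrac12(\alpha[M]+(1-\alpha)\langle M\rangle))$) is not how the paper proceeds. The paper compensates the jump sums: it introduces $W^{ar}_t=\sum_{0<s\le t}(1+\Delta M_s)^{ar}-(1+ar\Delta M_s)$, its dual predictable projection $V^{ar}$, and the auxiliary local martingales $N^o=arM$ and $N^p=arM+W^{ar}-V^{ar}$; the bound $V^{ar}\le\tfrac{ar(ar-1)}{2}\langle M^d\rangle$ is how the predictable quadratic variation enters, and a three-factor H{\"o}lder inequality with exponents $r/\alpha$, $r/(1-\alpha)$ and $s$ glues the optional and predictable pieces together. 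The second pass for $\alpha<1$ likewise needs the triple-parameter inequality of Lemma \ref{lemma:PQVProofIneq2} and a proof that the compensator $V^\lambda(\alpha)$ is continuous (this is where $\Delta M\ge0$ is used again, via predictable stopping times), none of which is suggested by your outline. Finally, the statement also asserts optimality of the constant $1/2$, which requires an explicit counterexample (the paper uses a scaled compensated Poisson process stopped at a suitable passage time, Example \ref{example:Optimal}); your proposal does not address this part at all.
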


Optimality of the constant $1/2$ will be shown in Example
\ref{example:Optimal}. We begin by considering the proof of the case $\alpha=1$, where local square
integrability is not required. Our proof in this case rests on the
following two elementary martingale lemmas and the following real
analysis lemma.

\begin{lemma}
\label{lemma:EMUI}
Let $M$ be a local martingale with initial value zero and $\Delta
M\ge0$. Then $E\EEE(M)_\infty\le 1$, and $\EEE(M)$ is a uniformly
integrable martingale if and only if $E\EEE(M)_\infty=1$.
\end{lemma}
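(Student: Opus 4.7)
The plan is to exploit the hypothesis $\Delta M\ge 0$ to show that $\EEE(M)$ is nonnegative, and then rely on the standard supermartingale-convergence argument.

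First I would note that the product formula for $\EEE(M)_t$ given in the introduction is termwise nonnegative under the hypothesis $\Delta M\ge 0$: each factor $(1+\Delta M_s)\exp(-\Delta M_s)$ is positive because $1+\Delta M_s\ge 1$, and the leading factor $\exp(M_t-\tfrac12[M^c]_t)$ is positive. So $\EEE(M)$ is a nonnegative process. Since by Theorem 9.2 of \cite{HWY} (invoked in the introduction) $\EEE(M)$ is a local martingale with initial value one, the standard Fatou argument along a localising sequence $(T_n)$ shows that $\EEE(M)$ is in fact a nonnegative supermartingale. In particular, $E\EEE(M)_t\le 1$ for every $t\ge 0$, and by the supermartingale convergence theorem $\EEE(M)_t$ converges almost surely as $t\to\infty$ to an integrable limit $\EEE(M)_\infty$. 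A further application of Fatou's lemma then gives $E\EEE(M)_\infty\le \liminf_t E\EEE(M)_t\le 1$, which is the first assertion.

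For the equivalence, if $\EEE(M)$ is a uniformly integrable martingale, then the usual closing of a UI martingale gives $E\EEE(M)_\infty=E\EEE(M)_0=1$. Conversely, suppose $E\EEE(M)_\infty=1$. The supermartingale property yields $\EEE(M)_t\ge E[\EEE(M)_\infty\mid\FFF_t]$ almost surely for every $t$. Taking expectations on both sides gives $1\ge E\EEE(M)_t\ge E\EEE(M)_\infty=1$, so both integrands must in fact be equal almost surely. Hence $\EEE(M)_t=E[\EEE(M)_\infty\mid\FFF_t]$, which exhibits $\EEE(M)$ as the martingale closed by $\EEE(M)_\infty$, and is therefore uniformly integrable.

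There is no real obstacle here beyond a careful invocation of Fatou's lemma to upgrade the local martingale property to the supermartingale property; the positivity of $\EEE(M)$, which is the only place the hypothesis $\Delta M\ge 0$ enters, makes this upgrade automatic. The remainder is a standard fact about nonnegative supermartingales.
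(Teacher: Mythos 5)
Your proof is correct and is essentially the argument the paper intends: the paper simply cites ``the optional sampling theorem for nonnegative supermartingales,'' and your write-up supplies exactly the standard details (nonnegativity from $\Delta M\ge0$, the Fatou upgrade from local martingale to supermartingale, and the closure argument when $E\EEE(M)_\infty=1$).
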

\begin{proof}
This follows from the the optional sampling theorem for nonnegative
supermartingales.
\end{proof}

\begin{lemma}
\label{lemma:UILpSuffCrit}
Let $M$ be a local martingale with initial value zero. Let $\CCC$
denote the set of all bounded stopping times. If there exists $a>1$
such that $(M_T)_{T\in \CCC}$ is bounded in $\LLL^a$, then $M$ is a
uniformly integrable martingale.
\end{lemma}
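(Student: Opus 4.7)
The plan is to exploit de la Vallée--Poussin: since $x \mapsto x^a$ is convex with $x^a/x \to \infty$ as $x \to \infty$, a family bounded in $\mathcal{L}^a$ is automatically uniformly integrable. Applying this to $(M_T)_{T \in \mathcal{C}}$, we obtain that the family $(M_T)_{T \in \mathcal{C}}$ is uniformly integrable.

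Next, I would upgrade $M$ from a local martingale to a martingale. Let $(T_n)$ be a localising sequence with $M^{T_n}$ a uniformly integrable martingale. For fixed $0 \le s \le t$, the stopping times $s \wedge T_n$ and $t \wedge T_n$ are bounded, hence lie in $\mathcal{C}$, and the martingale property of $M^{T_n}$ gives $E(M_{t \wedge T_n} \mid \mathcal{F}_s) = M_{s \wedge T_n}$. Letting $n \to \infty$, we have $M_{t \wedge T_n} \to M_t$ and $M_{s \wedge T_n} \to M_s$ almost surely, and since $(M_{t \wedge T_n})_n$ and $(M_{s \wedge T_n})_n$ are subfamilies of $(M_T)_{T \in \mathcal{C}}$, they are uniformly integrable. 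Passing to the limit in $\mathcal{L}^1$ and using that conditional expectation is continuous in $\mathcal{L}^1$ yields $E(M_t \mid \mathcal{F}_s) = M_s$, so $M$ is a martingale.

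Finally, since $(M_t)_{t \ge 0}$ is a subfamily of $(M_T)_{T \in \mathcal{C}}$ (each deterministic time is a bounded stopping time), the family $(M_t)_{t \ge 0}$ is uniformly integrable. Combined with the martingale property, this is the definition of $M$ being a uniformly integrable martingale.

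There is no real obstacle here; the only mild point of care is justifying the passage to the limit in the conditional expectation identity, which reduces to the uniform integrability established in the first step. The argument is essentially a textbook application of de la Vallée--Poussin together with the localisation definition, and I expect a proof of only a few lines.
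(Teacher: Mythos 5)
Your proof is correct and follows essentially the same route as the paper's: both deduce uniform integrability of $(M_T)_{T\in\CCC}$ from the $\LLL^a$-bound and then pass to the limit along a localising sequence using that the stopped variables $M_{T_n\land S}$ form a uniformly integrable subfamily. The only (immaterial) difference is the final step: you verify $E(M_t\mid\FFF_s)=M_s$ directly via $\LLL^1$-continuity of conditional expectation, whereas the paper checks $EM_S=0$ for every bounded stopping time $S$ and invokes the characterization of martingales in Theorem II.77.6 of Rogers--Williams; both are equally valid.
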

\begin{proof}
As $(M_T)_{T\in \CCC}$ is bounded in $\LLL^a$, $(M_T)_{T\in \CCC}$ is
uniformly integrable. Let $(T_n)$ be a localising sequence such that
$M^{T_n}$ is a uniformly integrable martingale for each $n\ge1$. Let $S$ be a bounded stopping time. Then
$(M_{T_n\land S})_{n\ge1}$ is uniformly integrable as well. As
$M_{T_n\land S}$ converges almost surely to $M_S$, we conclude that
$M_S$ is integrable and that $M_{T_n\land S}$ converges in $\LLL^1$ to
$M_S$. As $M^{T_n}$ is a uniformly integrable martingale,
$EM^{T_n}_S=0$ by the optional stopping theorem, and thus $EM_S=0$. By
Theorem II.77.6 of \cite{RW2}, $M$ is a martingale. And by our
assumptions, $(M_t)_{t\ge0}$ is uniformly integrable, so $M$ is a
uniformly integrable martingale.
\end{proof}

\begin{lemma}
\label{lemma:QVProofIneq}
Let $x\ge0$. It then holds that
\begin{align}
  0&\le \log\frac{1+\lambda x}{(1+x)^\lambda}\le \frac{\lambda(1-\lambda)}{2}x^2\label{eqn:LogIneq1}\quad\textrm{ and }\\
  0&\le \log\frac{(1+x)^a}{1+a x}\le \frac{a(a-1)}{2}x^2\label{eqn:LogIneq2}
\end{align}
for $0\le \lambda\le 1$ and $a\ge1$.
\end{lemma}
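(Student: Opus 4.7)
The plan is to focus effort on the first chain of inequalities (\ref{eqn:LogIneq1}), and then deduce the second one (\ref{eqn:LogIneq2}) by a substitution rather than repeating the analysis.

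For (\ref{eqn:LogIneq1}), the lower bound $\log\frac{1+\lambda x}{(1+x)^\lambda}\ge 0$ is equivalent to the Bernoulli-type inequality $(1+x)^\lambda\le 1+\lambda x$ for $x\ge 0$ and $0\le\lambda\le 1$, which follows from the concavity of $\log$ (or from weighted AM--GM applied to $1+x$ and $1$ with weights $\lambda$ and $1-\lambda$). For the upper bound, I would set
\begin{align*}
  g(x) = \tfrac{\lambda(1-\lambda)}{2}x^2 + \lambda\log(1+x) - \log(1+\lambda x),
\end{align*}
note $g(0)=0$, and compute
\begin{align*}
  g'(x) = \lambda(1-\lambda)x + \frac{\lambda}{1+x}-\frac{\lambda}{1+\lambda x} = \lambda(1-\lambda)x\left(1-\frac{1}{(1+x)(1+\lambda x)}\right),
\end{align*}
which is nonnegative for $x\ge 0$ since $(1+x)(1+\lambda x)\ge 1$. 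Hence $g(x)\ge 0$ for all $x\ge 0$, which is the desired upper bound.

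For (\ref{eqn:LogIneq2}), given $a\ge 1$, I would apply (\ref{eqn:LogIneq1}) with $\lambda=1/a\in(0,1]$ and with $x$ replaced by $ax$. The resulting inequality
\begin{align*}
  0\le \log\frac{1+x}{(1+ax)^{1/a}}\le \frac{(1/a)(1-1/a)}{2}(ax)^2 = \frac{a-1}{2}x^2
\end{align*}
becomes (\ref{eqn:LogIneq2}) upon multiplying through by $a$ and using $a\log(1+x)-\log(1+ax)=\log\frac{(1+x)^a}{1+ax}$.

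I do not anticipate any real obstacle here, since the statement is purely elementary; the only minor subtlety is choosing to present both bounds as a single chain by introducing one auxiliary function whose derivative factors cleanly, which avoids the temptation to handle the two bounds separately with less symmetric computations.
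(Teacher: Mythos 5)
Your proof is correct and follows essentially the same route as the paper: the Bernoulli-type inequality for the lower bound in (\ref{eqn:LogIneq1}), the auxiliary function $g$ (identical to the paper's $g_\lambda$) with a sign analysis of $g'$ for the upper bound, and the substitution $\lambda=1/a$, $x\mapsto ax$ to deduce (\ref{eqn:LogIneq2}). The only cosmetic differences are that you obtain the lower bound of (\ref{eqn:LogIneq2}) from the same substitution rather than directly from $(1+x)^a\ge 1+ax$, and your factorization of $g'$ is slightly cleaner than the paper's.
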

\begin{proof}
We first prove (\ref{eqn:LogIneq1}). To prove the lower inequality, it
suffices to argue the $(1+\lambda x)/(1+x)^\lambda\ge1$, which is
equivalent to $1+\lambda x-(1+x)^\lambda\ge0$. Fix $0\le \lambda\le 1$ and define $h_\lambda(x)=1+\lambda
x-(1+x)^\lambda$. Then
$h_\lambda'(x)=\lambda-\lambda(1+x)^{\lambda-1}\ge0$ and
$h_\lambda(0)=0$. This implies $0\le (1+x)^{\lambda-1}\le1$, as
desired, and thus proves the first inequality in (\ref{eqn:LogIneq1}). In
order to prove the second inequality, we define $g_\lambda$ by putting $g_\lambda(x)=\frac{1}{2}\lambda(1-\lambda)x^2-\log(1+\lambda
x)+\lambda\log(1+x)$. We then need to prove $g_\lambda(x)\ge0$. We obtain $g_\lambda(0)=0$ and
\begin{align}
  g'(x)&=\lambda(1-\lambda)x-\frac{\lambda}{1+\lambda x}+\frac{\lambda}{1+x}\notag\\
  &=\frac{\lambda(1-\lambda)x(1+\lambda
    x)(1+x)-\lambda(1+x)+\lambda(1+\lambda x)}{(1+\lambda x)(1+x)}\notag\\
  &=\frac{(\lambda-\lambda^2)(x^2+\lambda x^2+\lambda x^3)}{(1+\lambda x)(1+x)}\ge0,
\end{align}
so $g_\lambda(x)\ge0$ for all $0\le\lambda\le 1$ and $x\ge0$, yielding
the second inequality in (\ref{eqn:LogIneq1}). Next, consider
(\ref{eqn:LogIneq2}). For the lower inequality, note that
$(1+x)^a-(1+ax)\ge0$, so that $(1+x)^a/(1+ax)\ge1$. For the upper
inequality, we may apply (\ref{eqn:LogIneq1}) to obtain
\begin{align}
  \log\frac{(1+x)^a}{1+ax}
  &=a\log\frac{1+x}{(1+ax)^{1/a}}
  \le a\frac{\frac{1}{a}(1-\frac{1}{a})}{2}(ax)^2
  =\frac{a(a-1)}{2}x^2,
\end{align}
for $a\ge1$.
\end{proof}

\textit{Proof of Theorem \ref{theorem:main} for the case $\alpha=1$.}
In this case, we wish to show that when $\liminf_{\eps\to0}\eps\log
E\exp(((1-\eps)/2)[M]_\infty)$ is finite, $\EEE(M)$ is a uniformly
integrable martingale. We first prove that $\EEE(M)$ is a uniformly integrable martingale
under the stronger condition that $\exp((1+\eps)\frac{1}{2}[M]_\infty)$ is
integrable for some $\eps>0$. Fix such an $\eps>0$, and let
$a,r>1$. Applying (\ref{eqn:LogIneq2}) of Lemma
\ref{lemma:QVProofIneq}, we then have
\begin{align}
  \EEE(M)^a_t&=\exp\left(aM_t-\frac{1}{2}a[M^c]_t+\sum_{0<s\le
      t}\log(1+\Delta M_s)^a-a\Delta M_s\right)\notag\\  
  &=\EEE(ar M)_t^{1/r}\exp\left(\frac{a(ar-1)}{2}[M^c]_t+\sum_{0<s\le
      t}\log\frac{(1+\Delta M_s)^a}{(1+ar\Delta M_s)^{1/r}}\right)\notag\\
  &\le \EEE(ar M)_t^{1/r}\exp\left(\frac{a(ar-1)}{2}[M]_t\right).\label{eqn:AlphaOneFirstBound}
\end{align}
Now let $T$ be a bounded stopping time. Note that as $arM$ has nonnegative jumps, $\EEE(arM)$ is a
nonnegative supermartingale and so $E\EEE(arM)_T\le 1$. Let $y=ar$
and let $s$ be the dual exponent to $r$, such that
$s=r/(r-1)$. Applying H{\"o}lder's inequality in (\ref{eqn:AlphaOneFirstBound}), we obtain
\begin{align}
  E\EEE(M)^a_T
  &\le\left(E\exp\left(\frac{y(y-1)}{2(r-1)}[M]_\infty\right)\right)^{1/s}.\label{eqn:SimpleMainLaIneq1}
\end{align}
Next, note that the mapping $y\mapsto y(y-1)$ is increasing for
$y\ge1$. Therefore,
$\inf_{y>r>1}y(y-1)/(2(r-1))=\inf_{r>1}r/2=1/2$, and so there exists $y>r>1$ such that
$y(y-1)/(2(r-1))\le(1+\eps)/2$. Fixing such $y>r>1$ and
putting $a=y/r$, we obtain $a>1$ and (\ref{eqn:SimpleMainLaIneq1}) allows us to conclude
that with the supremum being over all bounded stopping times, we have
\begin{align}
    \sup_TE\EEE(M)^a_T
  &\le\left(E\exp\left((1+\eps)\frac{1}{2}[M]_\infty\right)\right)^{1/s},\label{eqn:SimpleMainLaIneq2}
\end{align}
where the right-hand side is finite by assumption. By Lemma \ref{lemma:UILpSuffCrit}, $\EEE(M)$ is a uniformly
integrable martingale.

Next, we merely assume that  $\liminf_{\eps\to0}\eps\log
E\exp(((1-\eps)/2)[M]_\infty)$ is finite. In particular, for all
$\eps>0$, $\exp(((1-\eps)/2)[M]_\infty)$ is
integrable. Therefore, $[M]_\infty$ is integrable, so $M$ is a
square-integrable martingale and the limit $M_\infty$
exists. Fix $0<\lambda<1$. As $[\lambda M]_t=\lambda^2[M]_t$, we have by our earlier
results that $\EEE(\lambda M)$ is a uniformly integrable
martingale. Using (\ref{eqn:LogIneq1}) of Lemma \ref{lemma:QVProofIneq}, we have
\begin{align}
  1&=E\exp\left(\lambda M_\infty-\frac{\lambda^2}{2}[M^c]_\infty+\sum_{0<t}\log(1+\lambda\Delta
    M_t)-\lambda\Delta M_t\right)\notag\\
  &=E\EEE(M)_\infty^\lambda
    \exp\left(\frac{\lambda(1-\lambda)}{2}[M^c]_\infty+\sum_{0<t}\log\frac{1+\lambda\Delta
      M_t}{(1+\Delta M_t)^\lambda}\right)\notag\\
  &\le E\EEE(M)_\infty^\lambda  \exp\left(\frac{\lambda(1-\lambda)}{2}[M]_\infty\right).\label{eqn:MainLambdaIneq}
\end{align}
Now fix $\gamma\ge 0$. Applying Jensen's inequality
in (\ref{eqn:MainLambdaIneq}) with the concave
function $x\mapsto x^\lambda$ as well as H{\"o}lder's inequality with the dual
exponents $\frac{1}{\lambda}$ and $\frac{1}{1-\lambda}$, we obtain,
with $F_\gamma=([M]_\infty>\gamma)$, that
\begin{align}
  1&\le E\EEE(M)_\infty^\lambda \exp\left(\frac{\lambda\gamma(1-\lambda)}{2}\right)
      +E\EEE(M)_\infty^\lambda  \exp\left(\frac{\lambda(1-\lambda)}{2}[M]_\infty\right)1_{F_\gamma}\notag\\
  &\le(E\EEE(M)_\infty)^\lambda\exp\left(\frac{\lambda\gamma(1-\lambda)}{2}\right)
     +(E\EEE(M)_\infty1_{F_\gamma})^\lambda\left(E\exp\left(\frac{\lambda}{2}[M]_\infty\right)\right)^{1-\lambda}\notag.
\end{align}
By our assumptions, we have that
$\liminf_{\lambda\to1}(E\exp((\lambda/2)[M]_\infty))^{1-\lambda}$ is
finite. Let $c$ denote the value of the limes inferior. By the above,
we then obtain
\begin{align}
  1\le E\EEE(M)_\infty+cE\EEE(M)_\infty1_{([M]_\infty>\gamma)}.
\end{align}
Letting $\gamma$ tend to infinity, we obtain $1\le E\EEE(M)_\infty$,
which by Lemma \ref{lemma:EMUI} shows that $\EEE(M)$ is a uniformly
integrable martingale.
\hfill$\Box$

For the remaining case of $0\le\alpha<1$, we need the following
further inequalities. 

\begin{lemma}
\label{lemma:PQVProofIneq}
Let $x\ge0$. It then holds that
\begin{align}
  0& \le (1+\lambda x)-(1+x)^\lambda \le \frac{\lambda(1-\lambda)}{2}x^2\label{eqn:PredFirstIneq}\quad\textrm{ and }\\
  0& \le (1+x)^a-(1+ax)\le \frac{a(a-1)}{2}x^2,\label{eqn:PredSecondIneq}
\end{align}
for $0\le \lambda\le 1$ and $1\le a\le 2$.
\end{lemma}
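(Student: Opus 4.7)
The plan is to treat the two inequalities by the same template used in the proof of Lemma \ref{lemma:QVProofIneq}: establish the lower bounds by Bernoulli-type monotonicity and establish the upper bounds by either a one-variable convexity argument or, equivalently, a Taylor expansion with integral remainder.

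For the lower bounds, both are essentially known. The inequality $(1+x)^{\lambda}\le 1+\lambda x$ for $0\le\lambda\le 1$, $x\ge 0$ was already verified in the proof of Lemma \ref{lemma:QVProofIneq} while handling the lower bound in (\ref{eqn:LogIneq1}) via the auxiliary function $h_\lambda$, and the inequality $(1+x)^a\ge 1+ax$ for $a\ge 1$, $x\ge 0$ was noted in the proof of the lower bound in (\ref{eqn:LogIneq2}). These directly give the left halves of (\ref{eqn:PredFirstIneq}) and (\ref{eqn:PredSecondIneq}).

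For the upper bounds, the natural tool is the second-order Taylor formula
\begin{align}
  (1+x)^{\beta}=1+\beta x+\beta(\beta-1)\int_0^x(x-t)(1+t)^{\beta-2}\df t,
\end{align}
valid for all $\beta\in\RR$ and $x\ge 0$. Applying this with $\beta=\lambda\in[0,1]$ and rearranging yields
\begin{align}
  (1+\lambda x)-(1+x)^{\lambda}=\lambda(1-\lambda)\int_0^x(x-t)(1+t)^{\lambda-2}\df t,
\end{align}
and since $\lambda-2<0$ we have $(1+t)^{\lambda-2}\le 1$ for $t\ge 0$, so the integral is bounded above by $\int_0^x(x-t)\df t=x^2/2$, giving the upper bound in (\ref{eqn:PredFirstIneq}). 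Applying the Taylor formula instead with $\beta=a\in[1,2]$ yields
\begin{align}
  (1+x)^{a}-(1+ax)=a(a-1)\int_0^x(x-t)(1+t)^{a-2}\df t,
\end{align}
and now one uses $a-2\le 0$, so again $(1+t)^{a-2}\le 1$ for $t\ge 0$ and the same estimate of the integral delivers the upper bound in (\ref{eqn:PredSecondIneq}).

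The only subtle point — which is really the heart of the statement — is the restriction $a\le 2$ in (\ref{eqn:PredSecondIneq}): without it, $(1+t)^{a-2}$ is no longer dominated by $1$ on $[0,\infty)$ and the inequality fails for large $x$. By contrast, the exponent $\lambda-2$ in (\ref{eqn:PredFirstIneq}) is automatically negative, so no ceiling on $\lambda$ beyond $1$ is required. If one prefers to avoid Taylor's formula entirely, each upper bound can alternatively be obtained by defining the difference of the two sides as a function of $x$, checking that it and its first derivative vanish at $x=0$, and observing that its second derivative is a positive multiple of $1-(1+x)^{\beta-2}$, which is nonnegative exactly under the stated range of the exponent.
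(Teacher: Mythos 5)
Your proof is correct and is essentially the paper's argument in a different guise: the paper proves each upper bound by checking that the difference vanishes to first order at $x=0$ and has nonnegative second derivative $\beta(\beta-1)\bigl(1-(1+x)^{\beta-2}\bigr)$ (sign adjusted), which is exactly the integral-remainder identity you write down, and both hinge on the same observation that $(1+t)^{\beta-2}\le 1$ precisely because $\beta\le 2$. The lower bounds are likewise deduced from Lemma \ref{lemma:QVProofIneq} in the paper, just as you do.
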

\begin{proof}
Fix $0\le \lambda\le 1$. The lower inequality in
(\ref{eqn:PredFirstIneq}) is equivalent to the statement that $(1+\lambda
x)/(1+x)^\lambda\ge1$, which follows from (\ref{eqn:LogIneq1}) of Lemma
\ref{lemma:QVProofIneq}. Next, put
$g_\lambda(x)=\frac{\lambda(1-\lambda)}{2}x^2+(1+x)^\lambda-(1+\lambda
x)$. In order to obtain the upper inequality, we need to prove
$g_\lambda(x)\ge0$. To this end, note that
\begin{align}
  g_\lambda'(x)&=\lambda(1-\lambda)x+\lambda(1+x)^{\lambda-1}-\lambda \quad\textrm{ and }\\
  g_\lambda''(x)&=\lambda(1-\lambda)-\lambda(1-\lambda)(1+x)^{\lambda-2}.
\end{align}
As $g_\lambda''(x)\ge0$, $g_\lambda'(0)=0$ and $g_\lambda(0)=0$, we conclude that $g_\lambda$ is nonnegative and thus
(\ref{eqn:PredFirstIneq}) holds. Next, consider $a$ with $1\le a\le
2$. Using (\ref{eqn:LogIneq2}) of Lemma \ref{lemma:QVProofIneq}, we
find that the lower inequality of (\ref{eqn:PredSecondIneq})
holds. For the upper inequality, define
$h_a(x)=\frac{a(a-1)}{2}x^2+1+ax-(1+x)^a$, we need to prove
$h_a(x)\ge0$. To do so, we note that
\begin{align}
  h_a'(x)&=a(a-1)x+a-a(1+x)^{a-1}\quad\textrm{ and }\\
  h_a''(x)&=a(a-1)-a(a-1)(1+x)^{a-2},
\end{align}
such that $h_a''(x)\ge0$, $h_a'(0)=0$ and $h_a(0)=0$, yielding as in
the previous case that $h_a$ is nonnegative and so we obtain (\ref{eqn:PredSecondIneq}).
\end{proof}

\begin{lemma}
\label{lemma:PQVProofIneq2}
Let $x\ge0$. It then holds that
\begin{align}
  0\le 
  \log\frac{1+\lambda x+(1+\sqrt{1-\alpha}x)^\lambda-(1+\lambda\sqrt{1-\alpha}x)}{(1+x)^\lambda}
  \le \alpha\frac{\lambda(1-\lambda)}{2}x^2\label{eqn:alphalambdalogineq}
\end{align}
for $\alpha,\lambda\in[0,1]$.
\end{lemma}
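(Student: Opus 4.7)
The plan is to reduce the displayed inequality to Lemma \ref{lemma:QVProofIneq} by means of an algebraic factorization. With the substitution $\mu = \sqrt{1-\alpha}$ (so that $\mu\in[0,1]$ and $\alpha = 1-\mu^2 = (1-\mu)(1+\mu)$), the numerator of the fraction becomes $N := (1+\mu x)^\lambda + \lambda(1-\mu)x$. Factoring $(1+\mu x)^\lambda$ out of $N$ and out of the identity $(1+x)^\lambda = (1+\mu x)^\lambda\bigl(1 + (1-\mu)x/(1+\mu x)\bigr)^\lambda$ yields
\begin{equation*}
\frac{N}{(1+x)^\lambda} \;=\; \frac{1+\lambda w}{(1+z)^\lambda},\qquad z := \frac{(1-\mu)x}{1+\mu x},\quad w := \frac{(1-\mu)x}{(1+\mu x)^\lambda}.
\end{equation*}
This identity is the key step; once it is in hand, everything else reduces to the inequalities already at our disposal.

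For the lower inequality, the lower part of (\ref{eqn:PredFirstIneq}) gives $(1+\mu x)^\lambda \le 1+\mu x$, hence $w \ge z$; combined with the first inequality of (\ref{eqn:LogIneq1}), we obtain $1+\lambda w \ge 1+\lambda z \ge (1+z)^\lambda$, which yields the nonnegativity claim.

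For the upper inequality, split
\begin{equation*}
\log\frac{1+\lambda w}{(1+z)^\lambda} \;=\; \log\frac{1+\lambda w}{1+\lambda z} + \log\frac{1+\lambda z}{(1+z)^\lambda}.
\end{equation*}
The second summand is dominated by $\tfrac{\lambda(1-\lambda)}{2}z^2 \le \tfrac{\lambda(1-\lambda)}{2}(1-\mu)^2 x^2$ via the second inequality of (\ref{eqn:LogIneq1}) together with the trivial bound $z \le (1-\mu)x$. For the first, $\log(1+t)\le t$ bounds it by $\lambda(w-z)$; writing $w-z = (1-\mu)x\cdot[(1+\mu x)^{1-\lambda}-1]/(1+\mu x)$ and applying the lower part of (\ref{eqn:PredFirstIneq}) with $1-\lambda$ in place of $\lambda$ to conclude $(1+\mu x)^{1-\lambda} \le 1+(1-\lambda)\mu x$, the first summand is at most $\lambda(1-\lambda)\mu(1-\mu)x^2$. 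Adding the two pieces gives
\begin{equation*}
\tfrac{\lambda(1-\lambda)}{2}x^2\bigl[(1-\mu)^2 + 2\mu(1-\mu)\bigr] \;=\; \tfrac{\lambda(1-\lambda)}{2}(1-\mu^2)x^2 \;=\; \alpha\tfrac{\lambda(1-\lambda)}{2}x^2,
\end{equation*}
as desired.

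The main obstacle is spotting the factorization that recasts the mixture of $x$ and $\mu x$ inside the logarithm into the clean form $(1+\lambda w)/(1+z)^\lambda$, which matches the template of Lemma \ref{lemma:QVProofIneq}. Without this substitution the expression looks intractable; with it, the proof is a short chain of the elementary estimates supplied by the two preceding lemmas.
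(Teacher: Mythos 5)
Your proof is correct, and it takes a genuinely different route from the paper's. You exploit the multiplicative factorization $1+x=(1+\mu x)(1+z)$ with $\mu=\sqrt{1-\alpha}$ and $z=(1-\mu)x/(1+\mu x)$, which recasts the quantity inside the logarithm as $(1+\lambda w)/(1+z)^\lambda$ and reduces both inequalities to the already-proved two-variable estimates: the lower bound follows from $w\ge z$ together with the first inequality of (\ref{eqn:LogIneq1}), and the upper bound from splitting off $\log\frac{1+\lambda z}{(1+z)^\lambda}\le\frac{\lambda(1-\lambda)}{2}z^2$ and controlling the remainder by $\lambda(w-z)\le\lambda(1-\lambda)\mu(1-\mu)x^2$; the identity $(1-\mu)^2+2\mu(1-\mu)=1-\mu^2=\alpha$ then produces exactly the claimed constant. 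The paper instead fixes $x$ and $\lambda$, verifies both inequalities at $\beta=1$ (where everything degenerates), and shows monotonicity in $\beta=\sqrt{1-\alpha}$ by a chain of sign checks on derivatives, each reduced in turn to a further derivative computation in $x$. Your argument is shorter, avoids the nested calculus verifications, and makes transparent where the factor $\alpha=(1-\mu)(1+\mu)$ comes from; the paper's approach is more mechanical but requires no inspiration beyond differentiating in the right variable. I verified each step of your argument, including the applications of (\ref{eqn:PredFirstIneq}) with exponent $1-\lambda$ and the bound $z\le(1-\mu)x$; all referenced inequalities are available at this point in the paper, and the boundary cases $\mu\in\{0,1\}$, $\lambda\in\{0,1\}$ degenerate correctly.
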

\begin{proof}
Let $\beta=\sqrt{1-\alpha}$, such that $\alpha=1-\beta^2$. We need to
prove that for $x\ge0$ and $\beta,\lambda\in[0,1]$, it holds that
\begin{align}
  0&\le \log\frac{\lambda(1-\beta)x+(1+\beta
    x)^\lambda}{(1+x)^\lambda}\le (1-\beta^2)\frac{\lambda(1-\lambda)}{2}x^2.\label{alphalambdalog1}
\end{align}
Consider the first inequality in (\ref{alphalambdalog1}). To prove
this, it suffices to show that for $x\ge0$ and $\beta,\lambda\in[0,1]$
it holds that
\begin{align}
    1&\le \frac{\lambda(1-\beta)x+(1+\beta
    x)^\lambda}{(1+x)^\lambda},
\end{align}
which is equivalent to  $\lambda(1-\beta)x+(1+\beta
x)^\lambda-(1+x)^\lambda\ge0$. As this holds for all $x\ge0$, $\lambda\in[0,1]$ and $\beta$ equal to
one, it suffices to prove that the derivative with respect to $\beta$
is nonpositive, meaning that we need to prove $\lambda x \ge
x\lambda(1+\beta x)^{\lambda-1}$. However, this follows as $0\le (1+\beta
x)^{\lambda-1}\le 1$. Thus, the first inequality in
(\ref{alphalambdalog1}) holds. Next, we consider the second
inequality. We need to show that for $x\ge0$ and
$\lambda,\beta\in[0,1]$, it holds that
\begin{align}
   0&\le (1-\beta^2)\frac{\lambda(1-\lambda)}{2}x^2- \log\frac{\lambda(1-\beta)x+(1+\beta
    x)^\lambda}{(1+x)^\lambda}.
\end{align}
By simple substitution, we note that the result holds when $\beta$ is
equal to one, $x\ge0$ and $0\le\lambda\le1$. It therefore suffices
to prove that the derivative with respect to $\beta$ is nonpositive,
meaning that we need to prove that for $x\ge0$ and $\beta,\lambda\in[0,1]$,
\begin{align}
  0&\ge \frac{\lambda x-x\lambda(1+\beta
    x)^{\lambda-1}}{\lambda(1-\beta)x+(1+\beta x)^\lambda}-\beta\lambda(1-\lambda)x^2.
\end{align}
Multiplying by the divisor, which is positive, this is equivalent to
\begin{align}
  0&\le \beta\lambda(1-\lambda)x^2(\lambda(1-\beta)x+(1+\beta x)^\lambda)-(\lambda x-x\lambda(1+\beta x)^{\lambda-1}).
\end{align}
which follows if we can show $1 \le \beta(1-\lambda)x(\lambda(1-\beta)x+(1+\beta
  x)^\lambda)+(1+\beta x)^{\lambda-1}$. As
  $\beta(1-\beta)\lambda(1-\lambda)x^2\ge0$, it thus suffices to show
  that for $x\ge0$ and $\lambda,\beta\in[0,1]$, we have $1 \le
  \beta(1-\lambda)x(1+\beta x)^\lambda+(1+\beta
  x)^{\lambda-1}$. However, as this holds for any $\beta,\lambda\in[0,1]$ when $x$ is zero, we
find that it suffices to show that the derivative with respect to $x$
is nonnegative, so that we need to show 
\begin{align}
  0&\le \beta(1-\lambda)((1+\beta x)^\lambda-x\beta\lambda(1+\beta
  x)^{\lambda-1})+\beta(\lambda-1)(1+\beta x)^{\lambda-2}
\end{align}
for $x\ge0$ and $\beta,\lambda\in[0,1]$. To this end, as
$\beta(1-\lambda)\ge0$, it suffices to show that $0\le (1+\beta x)^\lambda-x\beta\lambda(1+\beta x)^{\lambda-1}-(1+\beta x)^{\lambda-2}$
for $x\ge0$ and $\beta,\lambda\in[0,1]$. To this end, simply note that
\begin{align}
  &(1+\beta x)^\lambda-x\beta\lambda(1+\beta x)^{\lambda-1}-(1+\beta x)^{\lambda-2}\notag\\
  =&(1+\beta x)^{\lambda-2}( (1+\beta x)^2-x\beta \lambda(1+\beta x)-1)\notag\\
  =&(1+\beta x)^{\lambda-2}((1-\lambda)\beta^2x^2+\beta(2-\lambda)x).
\end{align}
As this is nonnegative, the result follows.
\end{proof}

The upper inequality in Lemma \ref{lemma:QVProofIneq} is not
obvious. However, an indication that the constant
$\alpha\frac{\lambda(1-\lambda)}{2}$ is the right one may be obtained
by a simple argument as follows. By the l'H{\^o}pital rule, we have
\begin{align}
  &\lim_{x\to0}\frac{1}{x^2}\log\frac{1+\lambda x+(1+\sqrt{1-\alpha}x)^\lambda-(1+\lambda\sqrt{1-\alpha}x)}{(1+x)^\lambda}\notag\\
  =&\lim_{x\to0}\frac{1}{x^2}\log\frac{\lambda(1-\sqrt{1-\alpha}) x+(1+\sqrt{1-\alpha}x)^\lambda}{(1+x)^\lambda}\notag\\
  =&\lim_{x\to0}\frac{1}{2x}\left(\frac{\lambda(1-\sqrt{1-\alpha})+\sqrt{1-\alpha}\lambda(1+\sqrt{1-\alpha}x)^{\lambda-1}}{\lambda(1-\sqrt{1-\alpha}) x+(1+\sqrt{1-\alpha}x)^\lambda}-\frac{\lambda}{(1+x)}\right).
\end{align}
Identifying a common divisor and applying the l'H{\^o}pital rule
again, we obtain that the above is equal to
\begin{align}
  &\frac{1}{2}\lim_{x\to0}((1-\alpha)\lambda(\lambda-1)(1+\sqrt{1-\alpha}x)^{\lambda-2})(1+x)\notag\\
  +&\frac{1}{2}\lim_{x\to0}(\lambda(1-\sqrt{1-\alpha})+\sqrt{1-\alpha}\lambda(1+\sqrt{1-\alpha}x)^{\lambda-1})\notag\\
  -&\frac{1}{2}\lim_{x\to0}\lambda(\lambda(1-\sqrt{1-\alpha})+\sqrt{1-\alpha}\lambda(1+\sqrt{1-\alpha}x)^{\lambda-1}),
\end{align}
which by elementary calculations is equal to $\alpha\frac{\lambda(1-\lambda)}{2}$, the factor in front of $x^2$ in Lemma \ref{lemma:PQVProofIneq2}.

\textit{Proof of Theorem \ref{theorem:main} for the case $0\le \alpha<1$.}
We consider the case $0<\alpha<1$, the remaining case of
$\alpha=0$ follows by a similar method.

Fix $\eps>0$. We first prove that $\EEE(M)$ is a uniformly integrable martingale
under the stronger condition that $\exp((1+\eps)\frac{1}{2}(\alpha[M]_\infty+(1-\alpha)\langle M\rangle_\infty)$ is
integrable. Let $a,r>1$. Defining $U$ by putting $U_t = ar\sum_{0<s\le t}\log(1+\Delta M_s)-\Delta
M_s$, we have
\begin{align}
  \EEE(M)^a_t&=\exp\left(ar M_t-\frac{1}{2}[arM]_t+U_t\right)^{1/r}
    \exp\left(\frac{a(ar-1)}{2}[M^c]_t\right).\label{eqn:QLCEMaDecomp}
\end{align}
We wish to decompose the first factor in the right-hand side of
(\ref{eqn:QLCEMaDecomp}) in two ways, one involving an
optional increasing factor and one involving a predictable increasing
factor. Put $N^o_t = ar
M_t$. For the optional decomposition, we note that
\begin{align}
   U_t &=\left(\sum_{0<s\le t}\log(1+\Delta N^o_s)-\Delta N^o_s\right)+\sum_{0<s\le t}\log\frac{(1+\Delta M_s)^{ar}}{1+ar\Delta M_s},\label{eqn:UDecomp1}
\end{align}
which yields
\begin{align}
    \exp\left(ar M_t-\frac{1}{2}[ar M]_t+U_t\right)^{\alpha/r}
  &=\EEE(N^o)^{\alpha/r}_t\exp\left(\frac{\alpha}{r}\sum_{0<s\le
      t}\log\frac{(1+\Delta M_s)^{ar}}{1+ar\Delta M_s}\right).\notag
\end{align}
Next, for $0\le \beta<2$, we define $W^\beta_t=\sum_{0<s\le t}(1+\Delta M_s)^\beta-(1+\beta\Delta
M_s)$. Note that the sum is well-defined, increasing and locally integrable by
(\ref{eqn:PredSecondIneq}) of Lemma \ref{lemma:PQVProofIneq}, as $[M]$ is locally integrable by our
assumptions. Therefore, the compensator $V^\beta$ of $W^\beta$ is
well-defined, and is increasing and locally integrable as well. Also note that $(1+\Delta M_s)^\beta=1+\beta\Delta M_s+\Delta
W^\beta$. Further define two local martingales by putting $N^p_t = ar M_t + W^{ar}_t-V^{ar}_t$ and $\bar{N}^p_t =
\int_0^t (1+\Delta V^{ar}_s)^{-1}\df{N}^p_s$, where $\bar{N}^p$ is
well-defined as $\Delta V^{ar}\ge0$ and $(1+\Delta V^{ar}_s)^{-1}$ is predictable and locally
bounded.

We begin by considering some properties of $\bar{N}^p$. First, we observe that
\begin{align}
  \Delta \bar{N}^p_t
  &=\frac{\Delta N^p_t}{1+\Delta V^{ar}_t}
   =\frac{ar\Delta M_t +\Delta W^{ar}_t-\Delta V^{ar}_t}{1+\Delta V^{ar}_t}\notag\\
  &=\frac{(1+\Delta M_t)^{ar}-(1+\Delta V^{ar}_t)}{1+\Delta V^{ar}_t}-1>-1\label{eqn:BarNpJumpIneq}
\end{align}
Furthermore, define $A^{ar}_t = \sum_{0<s\le t}\Delta
V_s^{ar}(1+\Delta V^{ar}_s)^{-1}$. As $\Delta V^{ar}$ is predictable
and nonnegative, the process $A^{ar}$ is well-defined, and is also predictable,
increasing and locally bounded, and $[A^{ar},N^p]_t = \sum_{0<s\le t}\Delta A^{ar}_s\Delta
N^p_s$. By Proposition I.4.49 of \cite{JS}, $[A^{ar},N^p]$ is a local
martingale. As the two local martingales $\int_0^t
A^{ar}_s\df{N^p}_s$ and $[A^{ar},N^p]$ are purely discontinuous and have the same jumps, they are equal by the uniqueness part of Theorem
7.25 of \cite{HWY}, and we thus obtain
\begin{align}
  \bar{N}^p_t &= N^p_t - \int_0^t \frac{\Delta V^{ar}_s}{1+\Delta V^{ar}_s}\df{N^p}_s\notag\\
  &=ar M_t+W^{ar}_t-V^{ar}_t - \sum_{0<s\le t}(1+\Delta V^{ar}_s)^{-1} \Delta V^{ar}_s\Delta N^p_s.
\end{align}
Also, as the function $x\mapsto \log(1+x)-x$ is nonpositive for $x\ge0$ and $V^{ar}$
is increasing, we obtain $\log(1+\Delta V^{ar})-\Delta
V^{ar}\le0$. Combining our observations, we get
\begin{align}
  &ar\log(1+\Delta M_s)-ar\Delta M_s-(\log(1+\Delta\bar{N}^p_s)-\Delta\bar{N}^p_s)\notag\\
  =&ar\log(1+\Delta M_s)- ar\Delta M_s-\left(\log\frac{(1+\Delta M_s)^{ar}}{1+\Delta V^{ar}_s}-\Delta\bar{N}^p_s\right)\notag\\
  =&\Delta W^{ar}_s-\frac{\Delta V^{ar}_s\Delta N^p_s}{1+\Delta V^{ar}_s}+\log(1+\Delta V^{ar}_s)-\Delta V^{ar}_s
  \le \Delta W^{ar}_s-\frac{\Delta V^{ar}_s\Delta N^p_s}{1+\Delta V^{ar}_s},
\end{align}
where the logarithm in first expression is well-defined by (\ref{eqn:BarNpJumpIneq}). This implies
\begin{align}
   U_t&\le \left(\sum_{0<s\le t}\log(1+\Delta\bar{N}^p_s)-\Delta\bar{N}^p_s\right)
          +\sum_{0<s\le t}\Delta W^{ar}_s-\frac{\Delta V^{ar}_s\Delta N^p_s}{1+\Delta V^{ar}_s}\notag\\
   &= \bar{N}^p_t-ar M_t+\left(\sum_{0<s\le t}\log(1+\Delta \bar{N}^p_s)-\Delta \bar{N}^p_s\right)+V^{ar}_t.\label{eqn:UDecomp2}
\end{align}
Also noting that $[\bar{N}^p]_t = [N^p]_t = [ar M]_t$, we obtain the relationship
\begin{align}
\exp\left(ar M_t-\frac{1}{2}[ar M]_t+U_t\right)^{(1-\alpha)/r}
  &\le\EEE(N^p)^{(1-\alpha)/r}_t\exp\left(\frac{1-\alpha}{r}V^{ar}_t\right)\notag.
\end{align}
Combining our results with (\ref{eqn:QLCEMaDecomp}), we obtain
$\EEE(M)_t^a\le \EEE(N^o)^{\alpha/r}_t\EEE(N^p)^{(1-\alpha)/r}X_t$, where the process $X$ is defined by
\begin{align}
  X_t&= \exp\left(\frac{a(ar-1)}{2}[M^c]_t+\frac{\alpha}{r}\sum_{0<s\le t}\log\frac{(1+\Delta M_s)^{ar}}{1+ar\Delta M_s} +\frac{1-\alpha}{r}V_t\right).
\end{align}
Here, note that by (\ref{eqn:LogIneq2}) of Lemma \ref{lemma:QVProofIneq}
and (\ref{eqn:PredSecondIneq}) of Lemma \ref{lemma:PQVProofIneq}, we
have, for $a,r>1$ such that $1\le ar\le 2$, that
\begin{align}
    \sum_{0<s\le t}\log\frac{(1+\Delta M_s)^{ar}}{1+ar\Delta M_s}
  &\le \frac{ar(ar-1)}{2}[M^d]_t\quad\textrm{ and }\\
  V^{ar}_t &\le \frac{ar(ar-1)}{2}\langle M^d\rangle_t,
\end{align}
leading to inequality
\begin{align}
    \EEE(M)_t^a\le \EEE(N^o)^{\alpha/r}_t\EEE(N^p)^{(1-\alpha)/r}
  \exp\left(\frac{a(ar-1)}{2}(\alpha[M]_t+(1-\alpha)\langle M\rangle_t\right).\label{eq:EMaDecomp}
\end{align}
Next, as $\Delta N^o_t\ge0>-1$ and $\Delta\bar{N}^p_t>-1$, $\EEE(N^o)$ and $\EEE(\bar{N}^p)$ are
nonnegative supermartingales, and so for all bounded stopping times $T$, $0\le E\EEE(N^o)_T\le 1$ and $0\le
E\EEE(N^p)_T\le1$. Now let $s$ be the dual exponent of $r$, such that
$s=r/(r-1)$. Noting that
$\frac{1}{r/\alpha}+\frac{1}{r/(1-\alpha)}+\frac{1}{s}=\frac{1}{r}+\frac{1}{s}=1$,we
may then apply H{\"o}lder's inequality for
triples of functions to the inequality (\ref{eq:EMaDecomp}), yielding for any
bounded stopping time $T$ that
\begin{align}
  E\EEE(M)^a_T \le \left(E\exp\left(\frac{y(y-1)}{2(r-1)}(\alpha[M]_\infty+(1-\alpha)\langle M\rangle_\infty\right)\right)^{1/s},
\end{align}
where $y=ar$. This holds for all $a,r>1$ such that $ar\le
2$, and is a bound similar to
(\ref{eqn:SimpleMainLaIneq1}). Proceeding as in the proof of the case
$\alpha=1$, we then obtain as a consequence of Lemma
\ref{lemma:UILpSuffCrit} that $\EEE(M)$ is a uniformly integrable
martingale.

Next, assume that
$\liminf_{\eps\to0}\eps\log E\exp(((1-\eps)/2)\langle
M\rangle_\infty)$ is finite. In particular, for $\eps>0$, $\exp(((1-\eps)/2)\langle M\rangle_\infty)$ is
integrable. In particular,
$\langle M\rangle_\infty$ is integrable. Now, as $M$ is locally
square-integrable by assumption, $[M]$ is locally integrable. Let
$(T_n)$ be a localising sequence such that $[M]^{T_n}$ is integrable. Then, $([M]-\langle M\rangle)^{T_n}$ is a uniformly
integrable martingale, so $E[M]_{T_n}=E\langle
M\rangle_{T_n}$. Letting $n$ tend to infinity, the monotone
convergence theorem shows that $[M]_\infty$ is integrable, so $M$ is a
square-integrable martingale, in particular the limit $M_\infty$ exists.

Now fix $0<\lambda<1$ and define
\begin{align}
  W^\lambda(\alpha)_t
  &=\sum_{0<s\le t} (1+\sqrt{1-\alpha}\Delta
  M_s)^\lambda-(1+\lambda\sqrt{1-\alpha}\Delta M_s).\label{eqn:WLambdaAlphaDef}
\end{align}
Note that by Lemma \ref{lemma:PQVProofIneq}, the terms in the sum in
(\ref{eqn:WLambdaAlphaDef}) are nonpositive and bounded from below by
$-(1-\alpha)\frac{1}{2}\lambda(1-\lambda)(\Delta M_s)^2$. In
particular, we find that $W^\lambda(\alpha)$ is well-defined,
decreasing and integrable. Letting $V^\lambda(\alpha)$ be the compensator of
$W^\lambda(\alpha)$, $V^\lambda(\alpha)$ is then decreasing integrable
as well, and $W^\lambda(\alpha)-V^\lambda(\alpha)$ is a uniformly
integrable martingale. We show that $V^\lambda(\alpha)$ is
continuous. To this end, let $T$ be some predictable
stopping time. By Theorem VI.12.6 of \cite{RW2} and its proof, we have $E\Delta M_T=0$ and $E\Delta (W^\lambda(\alpha)-V^\lambda(\alpha))_T=0$, so that
\begin{align}
  E V^\lambda(\alpha)_T
  &= E W^\lambda(\alpha)_T \notag\\
  &= E( (1+\sqrt{1-\alpha}\Delta M_T)^\lambda-(1+\lambda\sqrt{1-\alpha}\Delta M_T)) \\
  &= E (1+\sqrt{1-\alpha}\Delta M_T)^\lambda-1\ge0,
\end{align}
because of our assumption that $\Delta M\ge0$. Thus, we know now that
as $V^\lambda$ is decreasing, $\Delta V^\lambda_T\le 0$, and from the
above, $EV^\lambda(\alpha)_T\ge0$. We conclude that $\Delta
V^\lambda_T=0$ for all predictable stopping times. Lemma VI.19.2 of
\cite{RW2} then shows that $V^\lambda(\alpha)$ is continuous.

Let $L^\lambda_t = \lambda
M_t+W^{\lambda}(\alpha)-V^{\lambda}(\alpha)$. By our previous
observations, $L^\lambda$ is a uniformly integrable martingale, in particular
the limit $L^\lambda_\infty$ exists. Note that
$(L^\lambda)^c=\lambda M^c$, so it holds that
$[(L^\lambda)^c]_t=\lambda^2 [M^c]_t$. Also note that by continuity of
$V^\lambda(\alpha)$, we have
\begin{align}
  \Delta L^\lambda_t
  &=\lambda\Delta M_t +(1+\sqrt{1-\alpha}\Delta
  M_t)^\lambda-(1+\lambda\sqrt{1-\alpha}\Delta M_t)\notag\\  
  &=(1-\lambda\sqrt{1-\alpha})\Delta M_t+(1+\sqrt{1-\alpha}\Delta M_t)^\lambda-1\notag\\
  &\ge(1-\lambda\sqrt{1-\alpha})\Delta M_t \ge0,
\end{align}
and as $W^\lambda(\alpha)$ has nonpositive jumps, we also have $\Delta L^\lambda_t \le \lambda\Delta
M_t$. Combining these observations, we obtain $[L^\lambda]_t\le \lambda^2
[M]_t$, yielding that $L^\lambda$ is square-integrable. We then also obtain $\langle
L^\lambda\rangle_t\le \lambda^2\langle M\rangle_t$. This implies
\begin{align}
  \alpha[L^\lambda]_\infty+(1-\alpha)\langle L^\lambda\rangle_\infty
  \le \lambda^2(\alpha[M]_\infty+(1-\alpha)\langle M\rangle_\infty),
\end{align}
so by what we already have shown, $\EEE(L^\lambda)$ is a
uniformly integrable martingale. By elementary calculations, we obtain
\begin{align}
  \EEE(L^\lambda)_\infty
  &=\EEE(M)_\infty^\lambda
    \exp\left(\frac{\lambda(1-\lambda)}{2}[M^c]_\infty+\sum_{0<t}\log\frac{1+\Delta
        N_t}{(1+\Delta M_t)^\lambda}-V^\lambda(\alpha)_\infty\right).\notag
\end{align}
By (\ref{eqn:PredFirstIneq}) of Lemma \ref{lemma:PQVProofIneq} and Lemma \ref{lemma:PQVProofIneq2}, we
obtain the two inequalities
\begin{align}
    -V^\lambda(\alpha)_\infty& \le (1-\alpha)\frac{\lambda(1-\lambda)}{2}\langle M^d\rangle_\infty\\
    \sum_{0<t}\log\frac{1+\Delta L^\lambda_t}{(1+\Delta M_t)^\lambda}
    &\le \alpha\frac{\lambda(1-\lambda)}{2}[M^d]_\infty,
\end{align}
so that combining our conclusions, we have
\begin{align}
  1&=E\EEE(L^\lambda)_\infty\notag\\  
  &\le E\EEE(M)^\lambda_\infty
       \exp\left(\frac{\lambda(1-\lambda)}{2}([M^c]_\infty+\alpha[M^d]_\infty+(1-\alpha)\langle
         M^d\rangle_\infty)\right)\notag\\
  &=E\EEE(M)^\lambda_\infty\exp\left(\frac{\lambda(1-\lambda)}{2}(\alpha[M]_\infty+(1-\alpha)\langle M\rangle_\infty)\right),\label{eqn:QLCMainLambdaIneq}
\end{align}
which is a bound similar to (\ref{eqn:MainLambdaIneq}). Therefore,
proceeding as in the proof of the case $\alpha=1$, we obtain as a consequence of Lemma \ref{lemma:EMUI} that
$\EEE(M)$ is a uniformly integrable martingale.
\hfill$\Box$

We take a moment to reflect on the methods applied in the above
proof, and make the following observations. First, while the proof of
the case $0\le \alpha<1$ is more complicated than the proof of the
case $\alpha=1$, both proofs follow very much the same plan: Use
H{\"o}lder's inequality to argue that the result holds in a simple
case where $\frac{1}{2}$ is exchanged with $(1+\eps)\frac{1}{2}$ in the exponent,
then use H{\"o}lder's inequality again to obtain the general
proof. Also, note that the local martingale $\bar{N}^p$ used in the
first part of the proof of the case $0\le \alpha<1$ is related to
general decompositions of exponential martingales, see Lemma II.1 of
\cite{JM}.

The comparatively simple structure of the proof is made possible by
three main factors: The factor $\lambda(1-\lambda)$ present in the real
analysis inequalities allows us to apply H{\"o}lder's inequality in
the second parts of the proofs. Some of these inequalities have been
noted earlier with a factor $1-\lambda$ instead of
$\lambda(1-\lambda)$, compare for example (\ref{eqn:PredFirstIneq})
with (1.2) and (1.3) of \cite{LM}, where the inequalities follow by a
Taylor expansion argument. The more advanced triple-parameter
inequality (\ref{eqn:alphalambdalogineq}) allows us to obtain a
criterion combining the quadratic variation and the predictable
quadratic variation. Finally, the assumption $\Delta M\ge0$, apart
from making most of the real analysis inequalities applicable, also
ensures that the compensator $V^\lambda(\alpha)$ in the second part of
the proof of the case $0\le\alpha<1$ is continuous.

We conclude by giving an example showing that the constant $1/2$
obtained in Theorem \ref{theorem:main} is optimal.

\begin{example}
\label{example:Optimal}
Fix $0\le \alpha\le 1$. Let $\delta>0$. We wish to identify a locally
square-integrable local martingale $M$ with $\Delta M\ge0$ such that
\begin{align}
  \liminf_{\eps\to0}\eps\log E\exp\left((1-\eps)(1-\delta)\frac{1}{2}(\alpha [M]_\infty+(1-\alpha)\langle M\rangle_\infty)\right)<\infty,
\end{align}
while $\EEE(M)$ is not a uniformly integrable martingale. To this end,
fix $a,b>0$. Let $N$ be a
standard Poisson process and let $T_b = \inf\{t\ge0\mid
N_t-(1+b)t=-1\}$. By the path properties of $N$, $T_b$ is almost
surely finite, and $N_{T_b}-(1+b)T_b=-1$. Define $M_t = a(N^{T_b}_t-t\land T_b)$. Similarly to
\cite{AS}, by Lemma \ref{lemma:EMUI}
 and elementary calculations, we find that $\EEE(M)$ is a uniformly
integrable martingale if and only if 
\begin{align}
  E\exp(T_b((1+b)\log(1+a)-a))<1+a.\label{eqn:CounterExampleNoUniform}
\end{align}
Also, for any $c>0$, we have
\begin{align}
  \exp\left(c(\alpha[M]_\infty+(1-\alpha)\langle M\rangle_\infty)\right)
  =&\exp\left(ca^2(\alpha N_{T_b}+(1-\alpha)T_b)\right)\notag\\
  =&\exp\left(ca^2(\alpha((1+b)T_b-1)+(1-\alpha)T_b)\right)\notag\\
  =&\exp\left(T_bca^2(b\alpha+1)\right)\exp\left(-\alpha ca^2\right).
\end{align}
Thus, it suffices to identify $a,b>0$ such that
(\ref{eqn:CounterExampleNoUniform}) holds and such that
\begin{align}
  \liminf_{\eps\to0}\eps\log E\exp\left(T_b\frac{(1-\eps)(1-\delta)a^2}{2}(b\alpha+1)\right)<\infty,
\end{align}
and to do so, it suffices to identify $a,b>0$ such that
\begin{align}
  E\exp(T_b((1+b)\log(1+a)-a))&<1+a\textrm{ and }\label{eqn:CounterExample1}\\
  E\exp\left(T_b\frac{(1-\delta)a^2(b\alpha+1)}{2}\right)&<\infty.\label{eqn:CounterExample2}
\end{align}
Now define $f_b(\lambda)=\exp(-\lambda)+\lambda(1+b)-1$. Noting that
the process $L^\lambda$ defined by $L^\lambda_t=\exp(-\lambda(N_t-(1+b)t)+tf_b(\lambda))$ is a
nonnegative supermartingale, the optional sampling theorem yields
$E\exp(-T_bf_b(\lambda))\le \exp(-\lambda)$. By elementary
calculations, see \cite{AS} for details, we obtain that
(\ref{eqn:CounterExample1}) is satisfied whenever $0<b<a$. Also, note that $f_b$ takes its
minimum at $-\log(1+b)$. Therefore, $-f_b$ takes its maximum at
$\log(1+b)$, and the maximum is $(1+b)\log(1+b)-b$. Thus, it suffices
to choose $0<b<a$ such that $(1-\delta)a^2(b\alpha+1) /2\le
(1+b)\log(1+b)-b$, in particular it suffices to choose $0<b<a$ such that
\begin{align}
    \frac{(1-\delta)a^2}{2} &\le \log(1+b)-b/(1+b).
\end{align}
To do so, first note that by elementary inequalities, we may
pick $a>0$ so small that $a^2/2\le
(1-\delta)^{-1/2}(\log(1+a)-a/(1+a))$. It then suffices to identify
$b\in(0,a)$ such that $\sqrt{1-\delta}(\log(1+a)-a/(1+a))
  \le\log(1+b)-b/(1+b)$, and this is possible as the mapping $x\mapsto \log(1+x)-x/(1+x)$ is
continuous. This yields the desired example.
\end{example}

\bibliographystyle{amsplain}

\end{document}